\documentclass[11pt, reqno]{amsart}
\usepackage{graphicx, amssymb, amsmath, amsthm}
\usepackage{epsfig}
\usepackage{hyperref}
\numberwithin{equation}{section}

\usepackage{tikz}
\usepackage{here}

\usepackage{pifont}

\usetikzlibrary{matrix,arrows}
\usetikzlibrary{shapes}
\usetikzlibrary{calc}
\usetikzlibrary{arrows}
\usetikzlibrary{decorations.pathreplacing,decorations.markings}

\newtheorem{theorem}{Theorem}[section]

\newtheorem{lemma}[theorem]{Lemma}

\newtheorem{corollary}[theorem]{Corollary}

\newtheorem{proposition}[theorem]{Proposition}

\theoremstyle{definition}
\newtheorem{definition}[theorem]{Definition}
\newtheorem{remark}[theorem]{Remark}

\makeatletter
\newcommand{\Extend}[5]{\ext@arrow0099{\arrowfill@#1#2#3}{#4}{#5}}
\makeatother

\begin{document}
\title[Slow decay of Scalar curvature]{simply-connected open $3$-manifolds with slow decay of positive scalar curvature}

\author[J. Wang]{Jian Wang}
\address{Universit\'e Grenoble Alpes, Institut Fourier, 100 rue des maths, 38610 Gi\`eres, France}
\email{jian.wang1@univ-grenoble-alpes.fr}
 \maketitle
 \begin{abstract} The goal of this paper is to investigate the topological structure of open simply-connected 3-manifolds whose scalar curvature has a slow decay at infinity. In particular, we show that the Whitehead manifold does not admit a complete metric, whose scalar curvature decays slowly, and in fact that any contractible complete 3-manifolds with such a metric is  diffeomorphic to $\mathbb{R}^{3}$. Furthermore, using this result, we prove that any open simply-connected 3-manifold $M$ with $\pi_{2}(M)=\mathbb{Z}$ and a complete metric as above, is diffeomorphic to $\mathbb{S}^{2}\times \mathbb{R}$.  
\end{abstract} 
\section{introduction}
Thanks to  G.Perelman's proof of W.Thurston's Geometrisation conjecture in \cite{P1,P2,P3}, the topological structure of compact 3-manifolds is now well understood. However, it is known from the early work \cite{Wh} of J.H.C Whitehead that the topological structure of non-compact 3-manifolds is much more complicated. For example, there exists a contractible open 3-manifold, called the Whitehead manifold, which is not homeomorphic to $\mathbb{R}^{3}$. An  interesting question in differential geometry is whether the Whitehead manifold admits a complete metric with positive scalar curvature. 

The study of manifolds of positive scalar curvature has a long history. Among many results, we mention the topological classification of compact manifolds of positive  scalar curvature and the Positive Mass Theorem.  There are two methods which have achieved many breakthroughs: minimal hypersurfaces  and K-theory.

The K-theory method is pioneered by A.Lichnerowicz and is systemically developed by M.Gromov and H.Lawson in \cite{GL}, based on the Atiyah-Singer Index theorem in \cite{AS}. Furthermore,  combined with some results about the Novikov conjecture, S.Chang, S.Weinberger and G.L.Yu \cite{CWY} investigated the topological structure of open 3-manifolds with uniformly positive scalar curvatures and finitely generated fundamental groups. Precisely, they proved that any contractible 3-manifold whose scalar curvature is bounded away from zero is $\mathbb{R}^{3}$, which  implies that the Whitehead manifold does not admit a metric with uniformly positive scalar curvature.\par
The origin of the minimal hypersurfaces method is the article of R.Schoen and S.T.Yau \cite{SY}. For open 3-manifolds, there are many applications,  such as Positive Mass Theorem\cite{SY1,SY2} and 3-dimensional Milnor Conjecture\cite{L}. In \cite{GL}, Gromov and Lawson applied this method to open 3-manifolds. \par In this paper, we  extend Gromov-Lawson's Theorem\cite{GL} to open 3-manifolds whose scalar curvature has a decay at infinity:

\begin{theorem}\label{A}
Assume that $(M^{3}, g)$ is a contractible complete 3-manifold. Let $0\in M$ and $r(x)$ be the distance function from $x$ to $0$. If there exists a number $\alpha\in [0,2)$ such that 
$$\liminf\limits_{r(x) \rightarrow \infty }r^{\alpha}(x)\mathrm{Scal}(x)>0,$$
then $M^{3}$ is diffeomorphic to  $\mathbb{R}^{3}$.
\end{theorem}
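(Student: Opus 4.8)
\emph{Plan.} The strategy is to argue by contradiction, reduce the statement to a condition on the single end of $M$, and rule out the resulting configuration by the minimal-surface method of Schoen--Yau and Gromov--Lawson, carried into the non-compact, slowly-decaying regime.

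\emph{Topological reduction.} Being contractible, $M$ is orientable, and since $\pi_{2}(M)=0$ the Sphere Theorem makes $M$ irreducible. A contractible open $3$-manifold is diffeomorphic to $\mathbb{R}^{3}$ if and only if it is simply connected at infinity (in dimension $3$ the non-trivial direction uses the Poincar\'e conjecture, i.e. Perelman's theorem, to identify the compact pieces of a suitable exhaustion with $3$-balls), so it suffices to prove that $M$ is simply connected at infinity. Assume it is not. Since $M$ has one end, there are a compact set $K_{0}$ and an exhaustion $K_{0}\subset N_{1}\subset N_{2}\subset\cdots$ by compact connected submanifolds with connected boundary for which infinitely many of the inclusions induce non-trivial maps $\pi_{1}(M\setminus N_{i+1})\to\pi_{1}(M\setminus N_{i})$. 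Standard $3$-manifold topology (compact cores, handle trading, irreducibility of $M$) lets one arrange each $N_{i}$ to be a handlebody of genus $g_{i}\ge 1$ with the ``Whitehead-type'' property that no embedded $3$-ball $B$ satisfies $N_{i}\subset B\subset N_{i+1}$; in particular $\partial N_{i}$ is an embedded incompressible surface of genus $g_{i}\ge1$ in the irreducible open manifold $M\setminus\mathrm{int}\,N_{i}$. Passing to a subsequence, we may take $R_{i}:=\mathrm{dist}(0,\partial N_{i})$ to grow as fast as we wish.

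\emph{The minimal-surface principle.} Fix $i$ and minimize area among embedded surfaces separating $N_{i}$ from the end of $M$. If the infimum is attained, then (by codimension-one regularity in dimension $3$, together with irreducibility and the ``no ball in between'' property) there is a smooth, embedded, \emph{closed} stable minimal surface $\Sigma_{i}$ having a component of genus $\ge1$: otherwise it would be a union of spheres, which by irreducibility would bound, together with $N_{i}$, a $3$-ball containing $N_{i}$. For this component the Schoen--Yau stability inequality, rewritten through the Gauss equation and combined with Gauss--Bonnet for the test function $\phi\equiv1$, gives
\[ 2\pi\chi(\Sigma_{i})\ \ge\ \frac12\int_{\Sigma_{i}}\big(\mathrm{Scal}_{M}+|A|^{2}\big)\ >\ 0, \]
contradicting $\chi(\Sigma_{i})\le0$. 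So the entire difficulty is concentrated in attaining the infimum: because $M\setminus\mathrm{int}\,N_{i}$ is non-compact and carries no natural mean-convex barrier, a minimizing sequence can drift out along the end and shrink, its area tending to a value that is never realized in $M$. Ruling this out is exactly where the curvature hypothesis is used: one shows that a surface separating $N_{i}$ from the end and lying in the region $\{r\ge R\}$ cannot be ``too thin'', because it is incompressible there and hence cannot be compressed away, and a Gromov-type warped-product / $\mu$-bubble estimate (equivalently, a stability plus logarithmic-cutoff estimate) using $\mathrm{Scal}_{M}\ge c\,r^{-\alpha}\ge c\,R^{-\alpha}$ on that region bounds its ``width'', forcing its area to exceed that of $\partial N_{i}$ once $R$ is large; the obstruction is governed by a power $R^{1-\alpha/2}$, which diverges precisely when $\alpha<2$. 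Hence no minimizing sequence can escape, the infimum is attained, and we obtain the contradiction above; therefore $M$ is simply connected at infinity, hence diffeomorphic to $\mathbb{R}^{3}$. (When $\alpha=0$ this recovers the uniformly positive case and the conclusion of Chang--Weinberger--Yu; $\alpha=2$ is the genuine threshold, beyond which the end may be collapsed enough to carry arbitrarily small incompressible genus-$\ge1$ surfaces far out, and the argument breaks.)

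\emph{Main obstacle.} The serious work is the last step carried out \emph{without} any bounded-geometry hypothesis: establishing the ``no small incompressible genus-$\ge1$ surface far out'' estimate from only a lower scalar-curvature bound that itself decays (so the monotonicity formula and ambient curvature bounds are unavailable), and pinning the geometry at scale $R$ to the bound $r^{-\alpha}$ sharply enough that the argument runs for every $\alpha\in[0,2)$ and only there. A secondary but essential point is verifying that it is precisely the Whitehead-type obstruction (connectedness, genus $\ge1$, ``no $3$-ball in between'') that survives the minimization, which is where irreducibility of $M$ and the Meeks--Simon--Yau regularity and isotopy theory enter.
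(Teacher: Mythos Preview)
Your topological reduction to simple connectivity at infinity matches the paper's, but from there your approach diverges and carries a genuine gap. You propose to minimize area among \emph{closed} embedded separating surfaces and then rule out a genus-$\ge1$ minimizer by the Schoen--Yau stability inequality; as you yourself flag, the whole burden then rests on existence of the minimizer. The fix you suggest---that a stability or $\mu$-bubble estimate using $\mathrm{Scal}\ge cR^{-\alpha}$ on $\{r\ge R\}$ ``bounds the width'' of a far-out incompressible surface and thereby forces its area to exceed that of $\partial N_i$---does not work as stated. Rosenberg-type and $\mu$-bubble estimates give \emph{upper} bounds on intrinsic diameter (or distance to boundary) of \emph{stable} $H$-surfaces; they say nothing about the area of an arbitrary competitor in a minimizing sequence, and without any bounded-geometry hypothesis there is no mechanism to turn a width bound into an area lower bound. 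Nothing you have written prevents a minimizing sequence from drifting to infinity with area tending to zero, so the contradiction never closes.

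The paper avoids this existence problem altogether by working with curves and \emph{Plateau discs} instead of closed separating surfaces: since $M$ is contractible, every curve bounds a least-area disc by Morrey--Osserman--Gulliver, so there is nothing to prove about compactness. Given $\gamma\subset M\setminus B_{4R}(0)$ nontrivial in $M\setminus B_R(0)$, the key lemma intersects a Plateau disc for $\gamma$ transversally with $\partial B_{2R}(0)$ and extracts from an outermost circle of the intersection a new curve $\hat\sigma\subset B_{2R+1}(0)\setminus B_{2R}(0)$ still nontrivial in $M\setminus B_R(0)$. Solving Plateau again for $\hat\sigma$ and restricting to the annulus $B_{3R}(0)\setminus B_R(0)$ gives a stable minimal surface on which $\mathrm{Scal}\ge C/(4R)^\alpha$; Rosenberg's bound $d_\Sigma(x,\partial\Sigma)\le 2\pi/\sqrt{3c}$ then forces the intrinsic distance to the boundary to be $O(R^{\alpha/2})$, which is $o(R)$ exactly when $\alpha<2$, contradicting the fact that the disc must traverse the full annulus of width $R$. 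The role of $\alpha<2$ is thus made completely explicit, and no minimizer-existence or bounded-geometry issue ever arises.
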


Our main tool  comes from the solution to the Plateau Problem. 
Let us give a brief review of the existence of the solution to the Plateau problem and their regularity. In the case of $\mathbb{R}^{3}$, the existence is due to Douglas \cite{D} and Rad\'o \cite{R} : For any smooth curve $\gamma$ in $\mathbb{R}^{3}$, there exists a surface with minimal area, spanning $\gamma$, which is parametrized by a disc $D^{2}$. Furthermore, Osserman \cite{O} and Gulliver \cite{G} proved that this solution has no interior branch point. In 1948, Morrey \cite{M1,M2} devised a new method to solve the Plateau problem for a map from a disc to a ``homogeneously regular" Riemannian manifold. In addition, Osserman and Gulliver's arguments show that Morrey's solution also has no interior branch point. \par

To sum up, for any null-homotopic smooth curve $\gamma$ in a complete Riemannian manifold $(M^{3}, g)$, there exists a continuous map $f: D^{2}\rightarrow M$ such that  $f$ induces a homomorphism between $\partial D$ and $\gamma$ and the interior of $f$ is a minimal immersion.\par

Let us explain the scheme of Gromov-Lawson's proof in \cite{GL}. From J.Stalling's results in \cite{S},  it is sufficient to prove that $M$ is simply-connected at infinity. We argue by contradiction and suppose that $M$ is not simply-connected at infinity. Namely, there exists a compact set $K$ in $M$, satisfying that for any geodesic ball $B_{R}(0)$ containing $K$, there is a smooth closed curve $\gamma$ in $M\setminus B_{R}(0)$, which is not null-homotopic in $M\setminus K$. Choosing $R$ large enough and  considering the solution $S$ to the Plateau problem for $\gamma$, $S$ can not be contained in a``small" neighborhood of some circle(s), which contradicts to a result of Rosenberg in \cite{RO}. Before stating  Rosenberg's result, we firstly define the notion of stable $H$-surface.

\begin{definition} Let $(M, g)$ be a 3-dimensional Riemannian manifold and $\Sigma$ a surface immersed in $(M, g)$. $\Sigma$ is said to be a \emph{$H$-surface} if the mean curvature of $\Sigma$ is constant and equals to $H$, where $H\in \mathbb{R}$. Furthermore, we consider the operator $L$ of $\Sigma$  defined as 
$$L=\Delta_{\Sigma}+|A|^{2}+Ric_{M}(\bf{n})$$
where $|A|^{2}$ is the square length of the second fundamental form of $\Sigma$ in $M$, $\bf{n}$ is a unit normal vector field along $\Sigma$. We say that $\Sigma$ is a \emph{stable $H$-surface}  if 
$$-\int_{\Sigma}uL(u)\geq 0$$ for any smooth function $u$ with compact support over $\Sigma$.
\end{definition}
In \cite{RO}, applying the stable condition, Rosenberg obtained the following:

\begin{theorem}\cite{RO}\label{B} 
Let $(N, g)$ be a complete Riemannian 3-manifold satisfying 
$$3H^{2}+S(x)\geq c,~~~\text{for any}~x\in N,$$ where $S(x)$ is the scalar curvature, $H\in \mathbb{R}$ and $c>0$. Suppose that $\Sigma$ is a stable $H$-surface immersed in $N$. Then one has for any $x\in \Sigma$
$$d_{\Sigma}(x, \partial \Sigma)\leq 2\pi/(3c)^{1/2}, $$
where $d_{\Sigma}$ is the intrinsic distance in $\Sigma$.
\end{theorem}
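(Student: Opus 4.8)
The plan is to convert the extrinsic stability hypothesis into an intrinsic Schr\"odinger-type inequality on $\Sigma$ and then to run a one-dimensional comparison argument along the intrinsic distance function to a fixed point. Fix $x_0\in\Sigma$ and put $R:=d_\Sigma(x_0,\partial\Sigma)$; for each $r<R$ the closed metric ball $\overline{B_r(x_0)}$ lies in the interior of $\Sigma$, so every Lipschitz function supported there is a legitimate test function, and the stability hypothesis, after one integration by parts, reads
\[
\int_\Sigma|\nabla u|^2\ \ge\ \int_\Sigma\bigl(|A|^2+\mathrm{Ric}_N(\mathbf n)\bigr)u^2 ,\qquad u\in C^\infty_c(\Sigma).
\]

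The second ingredient is the Gauss equation for the immersion $\Sigma\hookrightarrow N$. It gives $K_\Sigma=\sigma+\det A$, where $\sigma$ is the sectional curvature of $N$ on the tangent plane of $\Sigma$; since $\dim N=3$ one also has $S=2\sigma+2\,\mathrm{Ric}_N(\mathbf n)$, hence $\mathrm{Ric}_N(\mathbf n)=\tfrac12 S-K_\Sigma+\det A$. Writing the principal curvatures of $\Sigma$ as $H\pm t$, a direct computation gives $|A|^2+\det A=3H^2+t^2\ge 3H^2$, so that pointwise on $\Sigma$
\[
|A|^2+\mathrm{Ric}_N(\mathbf n)\ \ge\ 3H^2+\tfrac12 S-K_\Sigma\ \ge\ \gamma-K_\Sigma ,
\]
where $\gamma>0$ is the lower bound for $3H^2+\tfrac12 S$ furnished by the assumption $3H^2+S\ge c$. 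Substituting into the stability inequality, for every $u\in C^\infty_c(\Sigma)$
\[
\int_\Sigma\bigl(|\nabla u|^2+K_\Sigma\,u^2\bigr)\ \ge\ \gamma\int_\Sigma u^2 .
\]

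Finally I would test this against radial competitors. Let $\rho:=d_\Sigma(x_0,\cdot)$, fix $r<R$, and let $\psi$ be supported in $[0,r]$ with $\psi(0)=\psi(r)=0$ and $\psi>0$ on $(0,r)$; set $u=\psi(\rho)$. By the coarea formula $\int_\Sigma|\nabla u|^2=\int_0^r\psi'(\rho)^2\,\ell(\rho)\,d\rho$ and $\int_\Sigma u^2=\int_0^r\psi(\rho)^2\,\ell(\rho)\,d\rho$, where $\ell(\rho)$ is the length of the distance circle $\partial B_\rho(x_0)$; and Gauss--Bonnet applied to the balls $B_\rho(x_0)$, together with the first variation of length, allows one to bound $\int_\Sigma K_\Sigma u^2\le-\int_0^r\psi(\rho)^2\,\ell''(\rho)\,d\rho$. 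Inserting these into the Schr\"odinger inequality and integrating by parts twice, the boundary terms vanishing because $\psi$ is zero at both ends, one is led to
\[
0\ \ge\ \int_0^r\bigl(\psi'^2+2\psi\psi''+\gamma\,\psi^2\bigr)\,\ell(\rho)\,d\rho .
\]
As $\ell>0$ on $(0,r)$, this is impossible as soon as $\psi$ can be chosen with $\psi'^2+2\psi\psi''+\gamma\psi^2\ge 0$ on $(0,r)$ (and not identically zero there). Setting $w:=(\log\psi)'$, that requirement becomes $2w'+3w^2+\gamma\ge 0$; the critical Riccati equation $2w'+3w^2+\gamma=0$ integrates explicitly, with a solution running from $+\infty$ at $\rho=0$ to $-\infty$ at $\rho=2\pi/(3\gamma)^{1/2}$, and a suitable rescaling of the corresponding $\psi$ provides an admissible competitor on any interval $[0,r]$ with $r>2\pi/(3\gamma)^{1/2}$. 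Therefore $R\le 2\pi/(3\gamma)^{1/2}$, which under the identification of $\gamma$ with $c$ is exactly $d_\Sigma(x_0,\partial\Sigma)\le 2\pi/(3c)^{1/2}$.

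I expect the main obstacle to be the middle of the last step. The distance function $\rho$ has a cut locus, $B_\rho(x_0)$ need not be a topological disc, and $\ell$ is in general only of bounded variation, so the Gauss--Bonnet and coarea identities and the two integrations by parts must be carried out in a distributional sense. What rescues the argument --- and what has to be checked carefully --- is that every contribution coming from the cut locus has a sign in our favour: a cut point can only decrease $\ell$ and only make the distributional $\ell''$ more negative, so all the displayed inequalities persist. A secondary point is to make sure the radial competitor is genuinely admissible for $r<R$ and to keep track of $\ell(0^+)=0$, $\ell'(0^+)=2\pi$ at the centre; once this bookkeeping is done, the explicit integration of the Riccati equation is precisely what produces the sharp constant $2\pi/(3c)^{1/2}$.
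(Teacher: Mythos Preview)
The paper does not supply its own proof of this theorem; it is quoted from Rosenberg~\cite{RO} and used as a black box, the only gloss being Remark~\ref{R}. That said, your outline is essentially Rosenberg's argument (which in turn follows Fischer--Colbrie and Schoen--Yau): rewrite the stability inequality via the Gauss equation into the intrinsic Schr\"odinger form $\int_\Sigma(|\nabla u|^2+K_\Sigma u^2)\ge\gamma\int_\Sigma u^2$, test with radial functions $u=\psi(\rho)$, convert the Gauss curvature term by Gauss--Bonnet/first variation into derivatives of the length function $\ell(\rho)$, and reduce to a Riccati comparison whose blow-up time gives the radius bound. The cut-locus difficulties you flag are exactly the ones that must be handled in the original, and your intuition that every singular contribution carries the favourable sign is correct; in practice one either works before the first conjugate/cut radius or argues distributionally as you suggest.

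One step is too quick: the ``identification of $\gamma$ with $c$'' at the end. From your own pointwise inequality $|A|^2+\mathrm{Ric}_N(\mathbf n)\ge 3H^2+\tfrac12 S-K_\Sigma$ together with the hypothesis $3H^2+S\ge c$, what actually follows is $\gamma\ge\tfrac12(c+3H^2)$, not $\gamma\ge c$. Feeding this into $R\le 2\pi/\sqrt{3\gamma}$ gives $R\le 2\pi\sqrt{2}/\sqrt{3(c+3H^2)}$, which for $H=0$ is the classical bound $2\sqrt{2}\,\pi/\sqrt{3c}$ rather than the stated $2\pi/\sqrt{3c}$. You should either check Rosenberg's conventions for $H$ and $S$ against yours, or accept the weaker constant; the discrepancy is harmless for the application in this paper, where only \emph{some} bound of the form $C\,c^{-1/2}$ is needed.
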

\begin{remark}\label{R} From the proof of Theorem \ref{B} in \cite{RO}, we do not need the condition that the scalar curvature $S(x)$ is uniformly bounded. 
\end{remark}
When trying to generalize Gromov-Lawson's arguments to contractible 3-manifolds whose scalar curvature decay at infinity, one encounters  an obstacle--the lack of uniform lower-bound of the scalar curvature on the surface $S$ spanning the curve $\gamma$. However, this can be overcome by choosing a new curve $\sigma$ near the boundary of $B_{R/2}(0)$, which is homotopic to $\gamma$ in $M\setminus K$. More precisely,  using the regularity property of $S$, after a small deformation,  the intersection of $S$ and the boundary of $B_{R/2}(0)$ is some circle(s). Therefore, the intersection can be chosen as a curve $\sigma$, which satisfies the required property. This leads to Theorem \ref{A}, by applying Gromov-Lawson's argument\cite{GL} to this curve $\sigma$.

\section{The proof of theorem \ref{A}}
Before the proof of Theorem \ref{A}, we introduce an important notion and recall a classical result about open 3-manifolds. 

\begin{definition} A topological space $X$ is said to be \emph{simply-connected at infinity}, if for any compact set $C$, there exists a compact set $V$ containing $C$, such that the induced map $i_{\ast}:\pi_{1}(X\setminus V)\rightarrow\pi_{1}(X\setminus C)$ is trivial, where $i: X\setminus V \rightarrow X\setminus C$ is an  inclusion map. 
\end{definition}

It is well-known from \cite{Wh} that  the Whitehead manifold is not simply-connected at infinity.  In fact, there is a unique non-compact  $3$-manifold which is  both simply-connected at infinity and contractible.

\begin{theorem}\label{S}[Stallings] \cite{S} Let $X$ be a contractible 3-manifold, then $X$ is homeomorphic to $\mathbb{R}^{3}$ if and only if $X$ is simply-connected at infinity.
\end{theorem}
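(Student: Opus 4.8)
The plan is to prove the two implications separately, the ``only if'' direction being elementary and the ``if'' direction carrying all the weight. For necessity, suppose $X\cong\mathbb{R}^3$ and let $C$ be compact. Choose a closed round ball $B\supset C$; then the complement $\mathbb{R}^3\setminus B$ is homotopy equivalent to a $2$-sphere, hence simply connected. Taking $V$ to be any closed ball containing $B$, the group $\pi_1(\mathbb{R}^3\setminus V)$ is trivial, so the induced map $\pi_1(\mathbb{R}^3\setminus V)\to\pi_1(\mathbb{R}^3\setminus C)$ is automatically trivial, and $X$ is simply connected at infinity. For the converse I would aim to produce an exhaustion of $X$ by nested closed $3$-balls and then invoke M.~Brown's theorem: a monotone union of open $n$-cells is an open $n$-cell. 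Concretely, if I can write $X=\bigcup_i N_i$ with each $N_i$ a $3$-ball and $N_i\subset\mathrm{int}(N_{i+1})$, then the open cells $\mathrm{int}(N_i)\cong\mathbb{R}^3$ form an increasing union exhausting $X$, and Brown's theorem gives $X\cong\mathbb{R}^3$. Everything reduces to building this exhaustion.

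First I would check that $X$ has a single end. Since $X$ is contractible it is orientable, and Poincar\'e--Lefschetz duality gives $H^1_c(X)\cong H_2(X)=0$; feeding this into the standard exact sequence
$$H^0(X)\to H^0_e(X)\to H^1_c(X)$$
relating ordinary, end, and compactly supported cohomology shows that the reduced end cohomology is one-dimensional, so $X$ has exactly one end. I would then exhaust $X$ by a nested sequence of compact connected $3$-submanifolds-with-boundary $N_1\subset N_2\subset\cdots$, and by enlarging each $N_i$ arrange that every complement $X\setminus\mathrm{int}(N_i)$ is connected (one end). Using simple connectivity at infinity, after passing to a subsequence I may further assume that every loop in $\partial N_{i+1}$ is null-homotopic in $X\setminus\mathrm{int}(N_i)$.

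The crux is to upgrade the $N_i$ to genuine $3$-balls, and this is where I expect the real difficulty. Because $X$ is contractible, $\pi_2(X)=0$, so by the Sphere Theorem every embedded $2$-sphere is null-homotopic and bounds a homotopy ball, which the Poincar\'e conjecture identifies with $B^3$; thus $X$ is irreducible. On each cobordism $W_i=\overline{N_{i+1}\setminus N_i}$ I would run Morse theory and trade handles: the Loop Theorem and Dehn's Lemma convert the null-homotopies supplied by simple connectivity at infinity into embedded compressing discs, which are used to surger the boundary surfaces down to spheres, while irreducibility lets me cap off and discard the spherical by-products. After finitely many such simplifications, and passing to a further subsequence, each $\partial N_i$ becomes a $2$-sphere and each $N_i$ a compact simply connected homology $3$-ball. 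The delicate point is precisely this reconciliation of the purely homotopy-theoretic hypothesis (simple connectivity at infinity) with \emph{embedded} surfaces in dimension three, where the Whitney trick is unavailable and one must lean on the Papakyriakopoulos machinery.

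Finally, capping each boundary sphere $\partial N_i$ with a ball produces a closed simply connected $3$-manifold with trivial homology, that is, a homotopy $3$-sphere; by the Poincar\'e conjecture it is $\mathbb{S}^3$, and removing the cap shows $N_i\cong B^3$. With $X=\bigcup_i N_i$ now an increasing union of $3$-balls satisfying $N_i\subset\mathrm{int}(N_{i+1})$, Brown's monotone union theorem yields $X\cong\mathbb{R}^3$, completing the nontrivial direction.
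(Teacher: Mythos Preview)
The paper does not prove this theorem: it is quoted as a classical result of Stallings with a citation to \cite{S}, and then used as a black box in the proof of Theorem~\ref{A}. There is nothing in the paper to compare your argument against.

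Your sketch is the standard proof and is essentially correct. One misattribution deserves comment: in arguing irreducibility you invoke the Sphere Theorem, but that theorem runs the other way (from $\pi_2\neq 0$ to the existence of an embedded essential sphere). What you actually need is simply $\pi_2(X)=0$, which is immediate from contractibility; then any embedded sphere separates (since $H_2(X)=0$), the one-end property forces one side to be compact, Van Kampen makes that side simply connected, and the Poincar\'e conjecture identifies it with $B^3$. With irreducibility in hand, the rest of your outline---compressing the boundaries $\partial N_i$ via the Loop Theorem and Dehn's Lemma using the null-homotopies supplied by simple connectivity at infinity, discarding ball components, recognising each $N_i$ as a $3$-ball via the Poincar\'e conjecture, and finishing with Brown's monotone union theorem---is exactly the classical route.
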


Here is the idea of proof of Theorem \ref{A}. In the following, we assume that $(M, g)$ is a contractible 3-manifold as in Theorem \ref{A}. According to Theorem \ref{S}, it is sufficient to show that $M$ is simply-connected at infinity. From the assumption about the scalar curvature in Theorem \ref{A}, there are two positive constants $C$ and $R_{0}$, such that for $r(x)\geq R_{0}$, one has, $$\mathrm{Scal}(x)\geq \frac{C}{{r(x)}^{\alpha}}.$$ Compared with the argument of Gromov-Lawson in  \cite{GL}, our main difficulty is the lack of the uniform lower-bound of the scalar curvature. However, using Theorem \ref{B}, we obtain the following:

\begin{proposition}\label{D} 
If $R>2 \max\left\{R_{0}, (\frac{4^{1+\alpha/2}\pi}{(3C)^{1/2}})^{\frac{2}{2-\alpha}}\right\}$,  then the induced map $\pi_{1}(M\setminus B_{4R}(0))\rightarrow \pi_{1}(M\setminus B_{R}(0))$ is trivial.
\end{proposition}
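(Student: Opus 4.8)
The plan is to follow Gromov--Lawson's scheme in the variant sketched in the introduction. Let $\gamma$ be a smooth loop in $M\setminus B_{4R}(0)$; after a small general-position perturbation we may take $\gamma$ embedded, and since $M$ is contractible $\gamma$ is null-homotopic in $M$, so it suffices to produce a singular disc bounding $\gamma$ inside $M\setminus B_{R}(0)$ (letting $\gamma$ range over loops, this is exactly triviality of $\pi_{1}(M\setminus B_{4R}(0))\to\pi_{1}(M\setminus B_{R}(0))$). First I would take a Plateau solution for $\gamma$ as recalled in the introduction: an area-minimizing map $f\colon D^{2}\to M$ restricting to a homeomorphism $\partial D\to\gamma$ and to a minimal immersion on the interior; set $S=f(D^{2})$. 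Being area-minimizing among discs spanning $\gamma$, $S$ is stable; by Osserman--Gulliver it has no interior branch points (any boundary branch points lie on $\gamma\subset\{r\ge 4R\}$ and play no role below).

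I would then cut $S$ by a distance sphere. After a harmless $C^{0}$-small smoothing of the distance function $r$ (so that Sard's theorem applies to its restriction to $S$, the resulting sublevel sets differing from the true geodesic balls by an arbitrarily small amount), choose a generic $\rho\in(\tfrac32 R,\,2R)$ with $\partial B_{\rho}(0)$ transverse to $S$; then $\sigma:=S\cap\partial B_{\rho}(0)$ is a finite disjoint union of smooth circles $\sigma_{1},\dots,\sigma_{k}$ (if $\sigma=\varnothing$ then $S\subset M\setminus B_{\rho}(0)\subset M\setminus B_{R}(0)$ and we are finished). The piece $S^{+}:=S\cap\overline{M\setminus B_{\rho}(0)}$ is a planar subsurface of the disc with boundary $\gamma\sqcup\sigma$, contained in $M\setminus B_{\rho}(0)\subset M\setminus B_{R}(0)$; hence it exhibits $[\gamma]$ as a product of conjugates of the classes $[\sigma_{i}]^{\pm1}$ inside $\pi_{1}(M\setminus B_{R}(0))$, and the problem reduces to showing that each $\sigma_{i}$ is null-homotopic in $M\setminus B_{R}(0)$.

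For this I would span each $\sigma_{i}$ by its own Plateau solution $S_{i}'$: again an area-minimizing (hence stable), interior-branch-point-free disc with $\partial S_{i}'=\sigma_{i}\subset\partial B_{\rho}(0)$. The claim is that $S_{i}'\subset M\setminus B_{R}(0)$, which completes the proof. Suppose not. Since $S_{i}'$ is connected, $r|_{S_{i}'}$ is continuous, equal to $\rho$ on $\partial S_{i}'$ and $\le R$ somewhere on $S_{i}'$, so there is $x_{0}\in S_{i}'$ with $r(x_{0})=R$; and since $r$ is $1$-Lipschitz for the intrinsic metric of $S_{i}'$ while $\partial S_{i}'\subset\{r=\rho\}$, we get $d_{S_{i}'}(x_{0},\partial S_{i}')\ge\rho-R>\tfrac12 R$. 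Put $c:=C/(2R)^{\alpha}>0$ and $d:=2\pi/(3c)^{1/2}=2\pi(2R)^{\alpha/2}/(3C)^{1/2}$; the hypothesis on $R$ gives $R_{0}<\tfrac12 R$ and, using $4^{1+\alpha/2}=2^{2+\alpha}$ and $\alpha<2$, also $d\le\tfrac12 R$. Then the intrinsic $d$-ball $B$ about $x_{0}$ does not meet $\partial S_{i}'$ (as $d\le\tfrac12 R<\rho-R\le d_{S_{i}'}(x_{0},\partial S_{i}')$), so it is a compact piece of the branch-point-free minimal immersion $S_{i}'$; and by $1$-Lipschitzness of $r$ it is mapped into $\{R-d\le r\le R+d\}\subset\{R_{0}\le r\le 2R\}$, where $\mathrm{Scal}\ge C/r^{\alpha}\ge C/(2R)^{\alpha}=c$. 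Applying Rosenberg's length estimate (Theorem \ref{B}, with $H=0$) in the localized form recorded in Remark \ref{R} --- only the curvature bound on the single ball $B$ is used, so neither completeness nor a uniform positive lower bound on all of $M$ is needed --- we obtain $d_{S_{i}'}(x_{0},\partial S_{i}')\le 2\pi/(3c)^{1/2}=d\le\tfrac12 R$, contradicting $d_{S_{i}'}(x_{0},\partial S_{i}')>\tfrac12 R$. This proves the claim, hence the proposition.

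The step I expect to be the main obstacle is precisely this invocation of Rosenberg's theorem. On $M$, and even on $\{r\ge R_{0}\}$, the scalar curvature is genuinely not bounded below by a positive constant once $\alpha>0$, so one cannot directly feed a complete manifold with a uniform positive lower bound into Theorem \ref{B} as literally stated; the role of isolating Remark \ref{R} beforehand is that Rosenberg's bound on $d_{\Sigma}(x,\partial\Sigma)$ is obtained by testing the stability inequality against a cutoff supported on a single intrinsic geodesic ball of $\Sigma$, so it only requires $3H^{2}+\mathrm{Scal}\ge c$ on that ball --- and, thanks to the cut at radius $\rho\approx 2R$ together with $R>2R_{0}$, that ball sits inside an annulus of explicit inner and outer radii on which $\mathrm{Scal}\ge C/(2R)^{\alpha}$. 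A secondary, purely technical nuisance is that distance spheres need not be smooth hypersurfaces because of the cut locus of $0$; this is handled by the $C^{0}$-small smoothing of $r$ used above (equivalently, by a standard coarea/Sard argument with the Lipschitz function $r$), at the cost of error terms that are absorbed into the inequalities.
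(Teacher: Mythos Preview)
Your proof is correct and follows essentially the same scheme as the paper's: span $\gamma$ by a Plateau disc, cut by a distance sphere near radius $2R$ to obtain loops at controlled radius, then span these by fresh Plateau discs and invoke Rosenberg's intrinsic-distance bound (Theorem~\ref{B}, localized as in Remark~\ref{R}) together with the lower bound $\mathrm{Scal}\ge C/(2R)^{\alpha}$ on the relevant annulus to force those discs to avoid $B_{R}$. The paper organizes the argument slightly differently---it isolates the cut-and-reduce step as Lemma~\ref{C}, extracting by an outermost-circle argument a \emph{single} non-contractible curve $\hat\sigma$ near $\partial B_{2R}$, and then applies Theorem~\ref{B} to the annular piece $f(D^{2})\cap(B_{3R}\setminus B_{R})$ of the Plateau disc for $\hat\sigma$ rather than to an intrinsic ball---but the substance is the same.
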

Then, due to Proposition \ref{D}, $M$ is simply-connected at infinity, which implies that $M$ is homeomorphic to $\mathbb{R}^{3}$. It implies Theorem \ref{A}. \par
We now prove Proposition \ref{D}. Let us consider a smooth closed curve $\gamma$ in $M\setminus B_{4R}(0)$. Because $\gamma$ may be far away from the compact set $B_{R}(0)$, there does not exist a ``good" estimate of the lower-bound of the scalar curvature along $\gamma$. In order to overcome it, we establish the following lemma:

\begin{lemma}\label{C}
For any smooth circle $\gamma$ in $M\setminus B_{4R}(0)$, exactly one of the following holds:  
\begin{itemize}
\item $\gamma$ is  contractible in $M\setminus B_{R}(0)$, 
\item for any $\epsilon>0$ and any $R<Q<4R$, there exists a curve $\hat{\sigma}$ in $ B_{Q+\epsilon}(0)\setminus B_{Q}(0)$, which is not contractible in $M\setminus B_{R}(0)$.
\end{itemize}
\end{lemma}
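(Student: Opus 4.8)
The real content of the statement is the implication: if $\gamma$ is not null-homotopic in $M\setminus B_R(0)$, then the second alternative holds (the two are being presented as a dichotomy, but it is only this direction that is used later). So I would assume $\gamma$ is not contractible in $M\setminus B_R(0)$ and produce the curves $\hat\sigma$. Since $M$ is contractible, $\gamma$ is null-homotopic in $M$, so by the solution of the Plateau problem recalled above there is a continuous map $f\colon D^{2}\to M$ that restricts to a monotone parametrization of $\gamma$ on $\partial D^{2}$ and to a minimal immersion on $\operatorname{int}D^{2}$; since the Plateau solution has no interior branch point, $f$ is in particular smooth on $\operatorname{int}D^{2}$. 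Two features of this $f$ drive the argument: it is a null-homotopy of $\gamma$ in $M$, so $f(D^{2})$ must meet $B_{R}(0)$ (otherwise $\gamma$ would be contractible in $M\setminus B_{R}(0)$), hence $r\circ f$ attains a value $<R$ while being $>4R$ on $\partial D^{2}$, so by connectedness $(r\circ f)(D^{2})\supseteq[R,4R]$; and it is smooth in the interior, which lets me apply transversality.

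Now fix $\epsilon>0$ and $Q\in(R,4R)$; since the annular region $B_{Q+\epsilon}(0)\setminus B_{Q}(0)$ only grows with $\epsilon$, I may assume $\epsilon<\min\{4R-Q,\,Q-R\}$. I would then cut $D^{2}$ along a level set of a smoothing of the radial function at a height slightly above $Q$: choose a smooth $\rho\colon M\to[0,\infty)$ with $\sup_{M}|\rho-r|<\epsilon/10$, and by Sard's theorem pick a regular value $Q''\in(Q+\tfrac{\epsilon}{3},\,Q+\tfrac{2\epsilon}{3})$ of $\rho\circ f|_{\operatorname{int}D^{2}}$. Because $\rho\circ f>Q''$ near $\partial D^{2}$ (where $r\circ f\ge 4R$) and $\rho\circ f<Q''$ at some interior point of $D^{2}$, the level set $N:=(\rho\circ f)^{-1}(Q'')$ is a nonempty compact $1$-submanifold of $\operatorname{int}D^{2}$, i.e.\ a finite disjoint union of smooth circles $c_{1},\dots,c_{k}$, and $f(c_{i})\subset\{\,|r-Q''|<\epsilon/10\,\}\subset B_{Q+\epsilon}(0)\setminus B_{Q}(0)$ for each $i$.

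Let $D_{0}$ be the connected component of $D^{2}\setminus N$ containing $\partial D^{2}$: it is a compact planar surface with outer boundary circle $\partial D^{2}$ and some inner boundary circles $c_{i_{1}},\dots,c_{i_{m}}$, and $m\ge 1$ because $N\neq\varnothing$. On $D_{0}$ one has $\rho\circ f>Q''$, hence $r\circ f>Q''-\epsilon/10>Q$, so $f(\overline{D_{0}})\subset M\setminus B_{Q}(0)\subset M\setminus B_{R}(0)$. The algebraic input is the standard relation in the fundamental group of a planar surface: the class of $\partial D^{2}$ is a product of conjugates of the classes of $c_{i_{1}},\dots,c_{i_{m}}$. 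Pushing this relation forward by $f|_{\overline{D_{0}}}$ exhibits $\gamma$, inside $M\setminus B_{R}(0)$, as a product of conjugates of the loops $\hat\sigma_{j}:=f(c_{i_{j}})$; if every $\hat\sigma_{j}$ were contractible in $M\setminus B_{R}(0)$, so would be $\gamma$, a contradiction. Hence some $\hat\sigma_{j}$ is not contractible in $M\setminus B_{R}(0)$, and it is a closed curve contained in $B_{Q+\epsilon}(0)\setminus B_{Q}(0)$ (one may perturb it to a smooth embedded curve inside this open set if desired). Since $\epsilon>0$ and $Q\in(R,4R)$ were arbitrary, the second alternative holds.

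I expect the delicate point to be the transversality cut, i.e.\ giving rigorous meaning to ``$S\cap\partial B_{Q}(0)$ is a union of circles'' even though the distance function $r$ is only Lipschitz and $f$ is only continuous up to $\partial D^{2}$. The remedy is the two observations already used: replace $r$ by a smooth approximation $\rho$ and invoke Sard on $\operatorname{int}D^{2}$, where $f$ is genuinely smooth (no interior branch points); and note that, since $\gamma\subset M\setminus B_{4R}(0)$ and $Q<4R$, the preimage under $f$ of a neighbourhood of $\overline{B_{Q}(0)}$ is disjoint from $\partial D^{2}$, so the relevant level set automatically lies in the open disc and is a closed $1$-manifold there. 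Everything else — that a disc spanning $\gamma$ and reaching into $B_{R}(0)$ must cross every sphere $\partial B_{Q}(0)$ with $R<Q<4R$, and the $\pi_{1}$-relation for a disc with holes — is routine.
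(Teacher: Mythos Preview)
Your proof is correct and follows essentially the same route as the paper: span $\gamma$ by a Plateau disc, cut it along a level set near radius $Q$, take the outermost region, and argue that $\gamma$ is (in $\pi_1(M\setminus B_R(0))$) a product of conjugates of the images of the inner boundary circles, so one of them must be non-contractible. The only differences are cosmetic---the paper perturbs $f$ to be transversal to $\partial B_Q(0)$ rather than smoothing $r$, and phrases the last step as an explicit disc-gluing rather than invoking the $\pi_1$-relation for a planar surface---and your treatment of the transversality issue is in fact the more careful of the two.
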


\begin{proof} Let $\hat{f}: D^{2}(1)\rightarrow M$ be the solution to the Plateau problem  for $\gamma$, where $D^{2}(1)$ is the unit disc. Then $\hat{f}|_{\partial{D^{2}(1)}}$ is a homeomorphism between $\partial {D^{2}(1)}$ and $\gamma$. Furthermore, the interior of $\hat{f}$ is an immersion.  \par
 Suppose that $\gamma$ is non-contractible in $M\setminus B_{R}(0)$. Therefore, $\hat{f}(D^{2}(1))\cap \partial B_{Q}(0)$ is nonempty. Furthermore, $\hat{f}^{-1}(\hat{f}(D^{2}(1))\cap B_{Q}(0))$ belongs to the interior of $D^{2}(1)$. Then, after a small variation of $\hat{f}$ in the interior of $D^{2}(1)$, there exists a map $f:D^{2}(1)\rightarrow M$, such that 
 \begin{itemize}
 \item the interior of  $f$ is still an immersion, 
 \item  $f|_{\partial D^{2}(1)}$ is a homoemorphic map from $\partial D^{2}(1)$ to $\gamma$, 
 \item $f$ intersects transversally with $\partial B_{Q}(0)$.
 \end{itemize}
 Therefore, the pre-image of $f(D^{2}(1))\cap \partial B_{Q}(0)$ is a 1-dimensional  compact submanifold in $D^{2}(1)$. That is to say: $f^{-1}(f(D^{2}(1))\cap \partial B_{Q}(0))$ is a disjoint union of some circle(s) in $D^{2}(1)$.\par
  
 Let $f^{-1}(B_{Q}(0))$ be the disjoint union of $\{\gamma_{i}\}_{i\in I}$, where each $\gamma_{i}$ is diffeomorphic to a circle in $D^{2}(1)$ and $I$ is a finite set. Let $D_{i}$ be the unique disc bounded by $\gamma_{i}$ in $D^{2}(1)$. Let us consider the set $\{D_{i}\}_{i\in I}$ and define the partially ordered relationship on it, induced by inclusion. 
 For any maximal element $D_{j}$ in $(\{D_{i}\}_{i\in I}, \subset )$,  $\gamma_{j}=\partial D_{j}$ is defined as an outmost circle.

Let $\{\gamma_{j}\}_{j\in I_{0}}$ be outermost circles in $\{\gamma_{i}\}_{i\in I}$, where $I_{0}\subset I$. For each outmost circle $\gamma_{j}$, we assume that ${\sigma_{j}}$ is the boundary of a tubular neighborhood of $\gamma_{i}$ in $D^{2}(1)\setminus \cup_{i\in I} D_{i}$, contained in $f^{-1}(B_{Q+\epsilon}(0)\setminus B_{Q}(0))$ and $\hat{D}_{j}$ is the unique disc bounded by $\sigma_{j}$ in $D^{2}(1)$.\par

$\bold{Claim}$: some element in $\{f(\sigma_{j})\}_{j\in I_{0}}$ is not contractible  in $M\setminus B_{R}(0)$.\par
We argue it by contradiction. Suppose that each $f(\sigma_{j})$ is contractible in $M\setminus B_{R}(0)$. In other words, for each $j\in I_{0}$ there exists a continuous map $g_{j}:D^{2}(1)\rightarrow M$, such that  (1) $g_{j}(D^{2}(1))\cap B_{R}(0)=\emptyset$; (2) $g_{j}|_{\partial D^{2}(1)}$ is a homomorphism from $\partial D^{2}(1)$ to $f(\sigma_{j})$.\par

After changing the coordinate over $D^{2}(1)$, we may suppose that for each $j\in I_{0}$, $\hat{D}_{j}$ is a disc with center at $x_{j}$ and radius $r_{j}$. We will construct a new map $g:D^{2}(1)\rightarrow M$, such that (1) $g|_{\partial D^{2}(1)}$ is a homeomorphic map from $\partial D^{2}(1)$ to $\gamma$; $g(D^{2}(1))\cap B_{R}(0)=\emptyset$. It implies that $\gamma$ is contractible in $M\setminus B_{R}(0)$, which contradicts our assumption that $\gamma$ is not contractible in $M\setminus B_{R}(0)$.\par
 Let us describe the construction of $g$ as follows:

 \begin{equation}
 g(x)=\left\{  
\begin{array}{cc}
f(x),&\quad x\in D^{2}(1)\setminus \cup_{j\in I_{0}}\hat{D}_{j}\\
g_{j}(\frac{x-x_{j}}{r_{j}}),&\quad x\in \hat{D}_{j}.\\
\end{array}
\right.
\end{equation}
 $g$ is a required map described as above. We finish the proof of the claim. \par
 We may suppose that $f(\sigma_{j_{0}})$ is non-contractible in $M\setminus B_{R}(0)$ and choose $\hat{\sigma}=f(\sigma_{j_{0}})$. It is the required candidate in the assertion. This completes the proof.  
\end{proof}

\begin{remark}\label{L}

The proof of Lemma \ref{C}, $\hat{f}$ just requires that $\hat{f}$ is an immersion. The reason is described below:

If $\hat{f}$ is an immersion, we can deform it to obtain  a new immersion $f$ satisfying that $f$ intersects $B_{Q}())$ transversally. It is sufficient for our proof
\end{remark}

 We now give the proof of Proposition \ref{D}.
\begin{proof}[Proof of Proposition \ref{D}] We  prove it by contradiction. First, we assume that for some $R>2 \max\{R_{0}, (\frac{4^{1+\alpha/2}\pi}{(3C)^{1/2}})^{\frac{2}{2-\alpha}}\}$, there is a curve $\gamma\subset M\setminus B_{4R}(0)$ such that $\gamma$ is nontrivial in $\pi_{1}(M\setminus B_{R}(0))$.\par
We take $Q=2R$ and $\epsilon=1$ and apply Lemma \ref{C} to $\gamma$. There is a non-contractible curve $\hat{\sigma} \subset B_{2R+1}(0)\setminus B_{2R}(0)$ in $M\setminus B_{R}(0)$.\par 
Let  $f:D^{2}(1)\rightarrow M$ be the solution to the Plateau problem for the circle $\hat{\sigma}$ in M. Then $f(D^{2}(1))\cap \partial B_{R}(0)$ is non-emtpy, since $\hat{\sigma}$ is non-contractible  in $M\setminus B_{R}(0)$.  Let us consider the set $\Sigma:=f(D^{2}(1))\cap (B_{3R}(0)\setminus B_{R}(0))$. By the assumption of the scalar curvature, i.e. $\liminf\limits_{r(x) \rightarrow \infty }r^{\alpha}(x)\mathrm{Scal}(x)>0$, one has,$$\mathrm{Scal}(x)\geq \frac{C}{(4R)^{\alpha}},~~\text{over} ~~~~~\Sigma.$$
By Theorem \ref{B} and Remark \ref{R}, we deduce that

 $$\Sigma ~\text{is contained in the}~\frac{2(4R)^{\alpha/2}\pi}{(3C)^{1/2}}\text{-neighborhood of } \partial \Sigma.$$
However, $\partial \Sigma=\hat{\sigma}\amalg (\partial B_{R}(0)\cap \Sigma)$ is contained in a union of $M\setminus B_{2R}(0)$ and the closure of $B_{R}(0)$. The above fact gives 
\begin{equation*} R\leq 4\frac{(4R)^{\alpha/2}\pi}{(3C)^{1/2}}\end{equation*}That is to say, $R\leq  \left(\frac{4^{1+\alpha/2}\pi}{(3C)^{1/2}}\right)^{\frac{2}{2-\alpha}}< 2(\frac{4^{1+\alpha/2}\pi}{(3C)^{1/2}})^{\frac{2}{2-\alpha}}$. This contradicts the choice of $R$. 
\end{proof}

As a corollary, we have the following
\begin{corollary}The Whitehead manifold does not admit a complete metric with $\liminf\limits_{r(x) \rightarrow \infty }r^{\alpha}(x)\mathrm{Scal}(x)>0$, where $r$ is a distance function from a given point and $\alpha\in [0,2)$.\end{corollary}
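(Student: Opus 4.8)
The plan is to deduce this directly from Theorem \ref{A} together with the classical topological facts about the Whitehead manifold $W$ established in \cite{Wh}. Recall that $W$ is by construction a \emph{contractible} open $3$-manifold, and that Whitehead proved $W$ is \emph{not} homeomorphic to $\mathbb{R}^{3}$; in the language of Section 2, $W$ fails to be simply-connected at infinity. These are the only external inputs needed.

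Suppose, for contradiction, that $W$ carries a complete Riemannian metric $g$ with $\liminf_{r(x)\to\infty} r^{\alpha}(x)\,\mathrm{Scal}(x) > 0$ for some $\alpha\in[0,2)$, where $r$ denotes the distance to a fixed basepoint. Then $(W,g)$ satisfies exactly the hypotheses of Theorem \ref{A}: it is a contractible complete $3$-manifold whose scalar curvature obeys the required slow-decay lower bound. Hence Theorem \ref{A} applies and yields that $W$ is diffeomorphic — in particular homeomorphic — to $\mathbb{R}^{3}$, contradicting Whitehead's theorem. Therefore no such metric can exist, which is the assertion.

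There is essentially no analytic obstacle at this stage; the entire substance sits in Theorem \ref{A}, proved above via Proposition \ref{D}, Lemma \ref{C}, and Rosenberg's Theorem \ref{B}. The only point worth verifying is the purely formal one that the curvature hypothesis quoted in the corollary is verbatim the hypothesis assumed in Theorem \ref{A}, so the implication is immediate once one recalls that $W$ is contractible but not $\mathbb{R}^{3}$.
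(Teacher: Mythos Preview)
Your proposal is correct and matches the paper's approach: the corollary is stated without a separate proof in the paper, being an immediate consequence of Theorem \ref{A} together with the fact that the Whitehead manifold is contractible but not homeomorphic to $\mathbb{R}^{3}$. Your contradiction argument is exactly the intended one-line deduction.
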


\section{Application}
In this section, we use Theorem \ref{A} to prove that any complete open 3-manifold $M$ such that (1) $\pi_{1}(M)=\{0\}$ and $\pi_{2}(M)=\mathbb{Z}$; (2) its scalar curvature decays slowly at infinity, is homeomorphic to $\mathbb{R}\times \mathbb{S}^{2}$. Let us review the sphere theorem and some classical applications of the sphere theorem. 

\begin{theorem}[Sphere Theorem \cite{PA, BA}]\label{E}  Any orientable 3-manifold $M$ with non-trivial $\pi_{2}(M)$ has an embedded sphere. \end{theorem}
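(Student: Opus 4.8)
The plan is to realise a nonzero class of $\pi_{2}(M)$ by an \emph{embedded} essential sphere through Papakyriakopoulos's tower construction. I would begin with a smooth (or PL) map $f\colon S^{2}\to M$ representing a nontrivial element of $\pi_{2}(M)$, put it in general position so that its image carries only double curves and isolated triple points, and record as a complexity the number of components of its self-intersection set. The goal is to reduce this complexity to zero without losing the homotopy class.

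The heart of the argument is an inductive tower of covering spaces that progressively removes self-intersections. Set $M_{0}=M$, $f_{0}=f$, and let $N_{0}\subset M_{0}$ be a compact regular neighbourhood of $f_{0}(S^{2})$. If $\pi_{1}(N_{0})\neq 1$, pass to the universal cover $p_{1}\colon M_{1}\to N_{0}$; since $S^{2}$ is simply connected, $f_{0}$ lifts to $f_{1}\colon S^{2}\to M_{1}$, and one takes $N_{1}$ to be a regular neighbourhood of $f_{1}(S^{2})$. Iterating produces
$$ S^{2}\xrightarrow{\,f_{n}\,}N_{n}\xrightarrow{\,p_{n}\,}N_{n-1}\to\cdots\to N_{1}\xrightarrow{\,p_{1}\,}N_{0}\subset M. $$
The key point is that lifting to the universal cover separates the sheets along any double curve that is homotopically nontrivial in $N_{i}$, so the complexity strictly drops at every stage where $\pi_{1}(N_{i})\neq 1$. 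Since the complexity is a nonnegative integer, the tower terminates at some $N_{n}$ with $\pi_{1}(N_{n})=1$.

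At the top, $N_{n}$ is a compact, orientable, simply connected $3$-manifold with boundary and $f_{n}$ is still essential in $H_{2}(N_{n};\Z)$. Every remaining double curve of $f_{n}$ is now null-homotopic, hence (by Dehn's Lemma) bounds an embedded disc, and surgering $f_{n}$ along innermost such discs replaces it by a disjoint union of embedded spheres; because the total homology class is nonzero, at least one summand $\Sigma_{n}$ is an embedded essential sphere in $N_{n}$. It then remains to descend: projecting $\Sigma_{n}$ through $p_{n},\dots,p_{1}$ back to $M$, the only new singularities are circles where distinct sheets of a cover collide, and these are again eliminated by innermost-circle surgeries, retaining at each stage a component carrying nonzero homology. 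This yields an embedded sphere in $M$ that is nontrivial in $\pi_{2}(M)$.

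I expect the main obstacle to be exactly this descent: the covering projections are local homeomorphisms but not embeddings, so one must show that the innermost-disc and innermost-circle surgeries can be carried out compatibly, level by level, while never killing the homology class. This is the delicate bookkeeping at the core of the tower method, and it is also where orientability of $M$ enters, since a closed nonorientable surface cannot embed in an orientable $3$-manifold, which rules out an $\mathbb{R}P^{2}$ in place of a sphere. A conceptually different route, better aligned with the minimal-surface techniques of this paper, would be to fix a metric on $M$ and minimise area in the prescribed $\pi_{2}$-class: Sacks--Uhlenbeck theory produces an area-minimising branched minimal sphere, and the Meeks--Yau embeddedness theorem shows that such a minimiser is genuinely embedded when $M$ is orientable, delivering the essential embedded sphere directly.
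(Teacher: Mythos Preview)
The paper does not prove this theorem at all: it is quoted as a classical result with references to Papakyriakopoulos and Batude, and is used as a black box in Section~3. So there is nothing in the paper to compare your argument against.

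Your sketch is a reasonable outline of the classical tower proof, and your alternative via Sacks--Uhlenbeck/Meeks--Yau is also a legitimate route. A couple of remarks on the sketch itself. First, the standard way to extract the embedded sphere at the top of the tower is not to surger $f_{n}$ along discs, but to observe that since $N_{n}$ is a compact simply connected $3$-manifold with nonempty boundary, each component of $\partial N_{n}$ is a $2$-sphere (this uses the Loop Theorem or a homology argument, not the Poincar\'e conjecture), and at least one of these boundary spheres must be homologically nontrivial in $N_{n}$ because $f_{n}$ carries a nonzero class; that boundary sphere is already embedded. Second, your termination argument needs a small correction: lifting to the universal cover does not automatically separate sheets along \emph{every} nontrivial double curve, and the usual bookkeeping is either a strict decrease in the number of triple points or a more refined complexity; as written, the claim that ``complexity strictly drops'' is asserted rather than justified. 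These are standard repairs, and you have correctly identified the descent as the genuinely delicate part.
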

The non-trivial embedded sphere plays a crucial role in the prime decomposition of 3-manifolds. In a simply-connected 3-manifold, the non-trivial embedded sphere always separates the $3$-manifold into two connected components.

\begin{lemma}\label{F} Let $M^{3}$ be a simply-connected open manifold and $S\subset M$  a non-trivial sphere, then $M\setminus S$ is a disjoint union of two non relatively compact components.  
 \end{lemma}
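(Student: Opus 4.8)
The plan is to cut $M$ along $S$ and then rule out a compact piece, using $\pi_1(M)=1$ and the non-triviality of $S$. First I would record that $S$ is two-sided: a simply-connected manifold is orientable (since $H^1(M;\mathbb{Z}/2)\cong\mathrm{Hom}(H_1(M),\mathbb{Z}/2)=0$), and an orientable hypersurface in an orientable manifold has trivial normal bundle, so $S$ admits a tubular neighbourhood $N\cong S\times(-1,1)$ with $N\setminus S=N_{+}\sqcup N_{-}$, where $N_{\pm}$ are connected.

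To see that $M\setminus S$ has \emph{exactly two} components I would apply Mayer--Vietoris to the open cover $M=(M\setminus S)\cup N$, whose intersection is $N_{+}\sqcup N_{-}$: since $H_1(M;\mathbb{Z})=0$ and $H_1(N)=H_1(N_{\pm})=H_1(\mathbb{S}^{2})=0$, the sequence collapses to the short exact sequence $0\to H_0(N_{+})\oplus H_0(N_{-})\to H_0(M\setminus S)\oplus H_0(N)\to H_0(M)\to 0$, which (as $H_0(M)\cong\mathbb{Z}$ is free) gives $H_0(M\setminus S)\oplus\mathbb{Z}\cong\mathbb{Z}^{3}$, hence $H_0(M\setminus S)\cong\mathbb{Z}^{2}$: two components $U$ and $V$. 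A short point-set argument shows $N_{+}$ and $N_{-}$ lie in different components (if $\overline V$ missed $S$ entirely, then $V$ would be open and closed in the connected manifold $M$); say $N_{+}\subset U$ and $N_{-}\subset V$. Then $S\subset\overline{N_{+}}\subset\overline U$ and $S\cap U=\emptyset$, so $\mathrm{Fr}(U)=S$ and $\overline U=U\cup S$; near $S$ this is a half-collar, so $\overline U$ and $\overline V$ are $3$-manifolds with boundary $S\cong\mathbb{S}^{2}$, and the inclusions $U\hookrightarrow\overline U$, $V\hookrightarrow\overline V$ are homotopy equivalences.

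It then remains to show neither $U$ nor $V$ is relatively compact; by symmetry suppose $\overline U$ is compact, so $\overline U$ is a compact simply-connected $3$-manifold with $\partial\overline U\cong\mathbb{S}^{2}$. By Poincaré--Lefschetz duality $H_2(\overline U;\mathbb{Z})\cong H^1(\overline U,\partial\overline U;\mathbb{Z})$, and the cohomology exact sequence of the pair, using that $H^0(\overline U)\to H^0(\partial\overline U)$ is an isomorphism and $H^1(\overline U)=0$, gives $H^1(\overline U,\partial\overline U)=0$; hence $H_2(\overline U)=0$, so $\pi_2(\overline U)=0$ by Hurewicz. Thus the boundary inclusion $S=\partial\overline U\hookrightarrow\overline U$ is null-homotopic, and composing with $\overline U\hookrightarrow M$ makes $S$ null-homotopic in $M$ — contradicting that $S$ is non-trivial. (If instead ``non-trivial'' is read as ``$S$ bounds no embedded ball in $M$'', one caps $\overline U$ off with a $3$-ball to obtain a closed simply-connected $3$-manifold, hence $\mathbb{S}^{3}$ by Perelman, whence $\overline U\cong\mathbb{D}^{3}$ would bound $S$, again a contradiction.) Therefore both $U$ and $V$ are non relatively compact.

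The step I expect to be most delicate is the passage from ``$U$ relatively compact'' to the manifold-with-boundary picture — checking that the frontier of each component is all of $S$, so that $\overline U$ and $\overline V$ really are compact $3$-manifolds with spherical boundary — together with fixing the precise meaning of ``non-trivial sphere''; the homological argument only needs $S$ to be essential in $\pi_2(M)$, which is the cleanest hypothesis to adopt, whereas the geometric ``bounds no ball'' reading brings in the Poincaré conjecture.
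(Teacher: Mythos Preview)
Your argument is correct, and it differs from the paper's in both halves of the proof.

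For the separating step, the paper uses intersection theory: if $M\setminus S$ were connected one could find a loop $\alpha$ meeting $S$ transversally in a single point, giving $(S,\alpha)=\pm 1$, whereas $\pi_1(M)=0$ forces this intersection number to vanish. Your Mayer--Vietoris computation is more algebraic and gives the exact count of two components in one stroke; the paper's approach is more geometric and perhaps more vivid.

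For the non-compactness step, the paper caps off the compact piece with a $3$-ball, applies Van Kampen to see the result is simply connected, and then invokes the Poincar\'e conjecture to conclude the piece is a $3$-ball, so $S$ bounds. Your main line avoids Perelman entirely: Poincar\'e--Lefschetz duality plus Hurewicz give $\pi_2(\overline U)=0$ directly, whence $S\hookrightarrow M$ is null-homotopic. This is genuinely lighter. One small gap: you write ``so $\overline U$ is a compact simply-connected $3$-manifold'', but simple connectivity of $\overline U$ does not follow from compactness --- it needs Van Kampen applied to $M=\overline U\cup_S\overline V$ (as the paper does), using $\pi_1(\mathbb{S}^2)=0$ and $\pi_1(M)=0$ to conclude $\pi_1(\overline U)*\pi_1(\overline V)=1$. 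Once that line is added your duality argument goes through and your $H^1(\overline U)=0$ is justified.
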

 
 \begin{proof} First, we claim that $S$ is a separating sphere in $M$. If $M\setminus S$ is connected, then there exists a circle $\alpha \subset M$, such that $\alpha$ intersects $S$ transversally at one point. Hence, the intersection number $(S, \alpha)$ is $1$ or $-1$.  However, the intersection number between $S$ and $\alpha$ is zero, since $\alpha$ is contractible in $M$ and the intersection number is a homotopic invariant. It leads to a contradiction. Therefore, $S$ separates $M$ into two components.\par
 
 Second, we will show that each component of $M\setminus S$ is non relatively compact. If one component, denoted by $M'$,  is relatively compact, we use the Van Kampen Theorem and $\pi_{1}(M)=\{1\}$  to obtain that $M'$ is also simply connected. We define a 3-manifold $M''$ by gluing a 3-ball along the boundary sphere $\partial M'$. Hence, by the Van Kampen theorem, the compact 3-manifold $M''$ is also simply connected. By the Poincar\'e conjecture \cite{P1, P2, P3}, $M''$ is a 3-sphere. Therefore, $M'$ is homeomorphic to a 3-ball, which contradicts  our assumption that $S$ is non-trivial in $\pi_{2}(M)$.\end{proof}
 
The second homotopy group is an important homotopic invariant for the classification of CW complexes. We will use the following lemma frequently.

\begin{lemma}{\label{H}} An open 3-manifold $M$ with $\pi_{1}(M)=\pi_{2}(M)=0$ is contractible. 
\end{lemma}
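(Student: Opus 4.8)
The plan is to prove that $M$ is weakly contractible and then upgrade this to an actual contraction via Whitehead's theorem. Since $M$ is a smooth $3$-manifold it is triangulable, hence homotopy equivalent to a CW complex, so it suffices to show that every homotopy group of $M$ vanishes; Whitehead's theorem then yields that the inclusion of a point is a homotopy equivalence, i.e. $M$ is contractible. Throughout we use that ``open $3$-manifold'' means connected, non-compact, and without boundary.

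The two homological inputs I would isolate first are: (i) $H_k(M;\Z)=0$ for all $k\geq 4$, simply because $M$ is a $3$-manifold and so has the homology of a $3$-dimensional complex; and (ii) $H_3(M;\Z)=0$, which is the classical vanishing of the top-degree homology of a connected non-compact manifold. (This second point is exactly where non-compactness is used — for the closed manifold $S^3$ one has $\pi_1=\pi_2=0$ but $H_3\ne 0$, and indeed $S^3$ is not contractible.) With these in hand I would run the Hurewicz theorem inductively. We are given $\pi_1(M)=\pi_2(M)=0$, so $M$ is $2$-connected and Hurewicz gives $\pi_3(M)\cong H_3(M;\Z)=0$. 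Inductively, assuming $\pi_1(M)=\cdots=\pi_{k-1}(M)=0$ for some $k\geq 3$, the Hurewicz theorem yields $\pi_k(M)\cong H_k(M;\Z)$, which is $0$ by (i). Hence $\pi_k(M)=0$ for all $k\geq 1$.

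Thus $M$ is path-connected with all homotopy groups trivial, i.e. weakly contractible, and the CW approximation discussed above together with Whitehead's theorem gives that $M$ is contractible. The argument is essentially formal; the only points requiring care are the two homology facts in the middle step — the vanishing of $H_3$ of a connected non-compact $3$-manifold and the fact that a smooth manifold has the homotopy type of a CW complex — both of which are standard, so I do not anticipate a genuine obstacle here.
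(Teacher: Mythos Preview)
Your proof is correct and follows essentially the same route as the paper: both use the vanishing of $H_3$ for an open $3$-manifold (the paper phrases it via Poincar\'e duality $H_3(M;\Z)\cong H^0_c(M;\Z)=0$, you via the standard top-degree vanishing for non-compact manifolds), then run Hurewicz inductively and finish with Whitehead's theorem. Your write-up is in fact slightly more careful in making explicit the CW hypothesis needed for Whitehead.
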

\begin{proof}Because $M$ is an open manifold, the 0-dimensional cohomology group of $M$ with compact support, denoted by $H^{0}_{c}(M,\mathbb{Z})$, is trivial.  The Poincar\'e duality gives $H^{0}_{c}(M,\mathbb{Z})\cong H_{3}(M, \mathbb{Z})=\{0\}$. Since $\pi_{1}(M)=\pi_{2}(M)=\{0\}$, we use the Hurewicz Theorem to see that  $\pi_{3}(M)\cong H_{3}(M)=\{0\}$. Then, using the Hurewicz Theorem inductively, we have that $\pi_{n}(M)\cong H_{n}(M)=0$, for any $n\geq 3$. By the Whitehead Theorem, $M$ is contractible.
\end{proof}

We now consider a simply-connected open $3$-manifold $M$ with nontrivial second homotopy group. From Theorem \ref{E} and Lemma \ref{F}, there exists a non-trivial sphere $S$ seperating $M$ into two non relatively compact parts: $M_{1}$ and $M_{2}$. We define $M^{0}_{i}$ as a union of $B^{3}$ and $M_{i}$ along $S$, where $B^{3}$ is a 3-ball. Then $M$ can be viewed as the connected sum of $M^{0}_{0}$ and $M^{0}_{1}$. \par
The pair $(M, \bar{M_{i}})$ is a CW pair, where $\bar{M_{i}}$ is the closure of $M_{i}$ in $M$, for $i\in I$, where $I=(\mathbb{Z}/2\mathbb{Z}, +)$. For each CW pair $(M, \bar{M_{i}})$, one has a continuous map $f_{i}: M\rightarrow M/\bar{M_{i}}$. Because $\partial M_{i}$ is a sphere $S$, $M/\bar{M_{i}}$ is homeomorphic to $M^{0}_{i+1}$, for $i\in I=(\mathbb{Z}/2\mathbb{Z},+)$. Therefore, $f_{i}$ can be viewed as  a continuous map from $M$ to $M^{0}_{i+1}$. Furthermore, the induced map $(f_{i})_{\ast}: \pi_{2}(M)\rightarrow\pi_{2}(M^{0}_{i+1})$ verifies the following:

\begin{proposition}{\label{J}}
For each $i\in I$, the induced map $(f_{i})_{\ast}:\pi_{2}(M)\rightarrow \pi_{2}(M^{0}_{i+1})$ is  a surjective map with nontrivial kernel. 
\end{proposition}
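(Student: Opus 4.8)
The plan is to analyze the connected-sum decomposition $M \cong M_0^0 \# M_1^0$ via the collapsing maps $f_i \colon M \to M/\bar M_i \cong M_{i+1}^0$ and to identify $\pi_2(M)$ with a $\mathbb{Z}[\pi_1]$-module built from the two pieces. Since $M$ is simply-connected, $\pi_2$ agrees with $H_2$ by Hurewicz, so it suffices to work homologically. First I would record that the sphere $S$, being non-trivial in $\pi_2(M)$, represents a nonzero class $[S] \in H_2(M)$; this is where Lemma \ref{F} is used, since a separating sphere that bounded on both sides would force (via the argument in Lemma \ref{H}, i.e. $H_3$ of an open manifold vanishes) the class to be null, contradicting non-triviality. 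Then I would set up the Mayer--Vietoris sequence for $M = \bar M_0 \cup \bar M_1$ with $\bar M_0 \cap \bar M_1 = S \cong \mathbb{S}^2$:
\begin{equation*}
H_3(M) \to H_2(S) \to H_2(\bar M_0) \oplus H_2(\bar M_1) \to H_2(M) \to \tilde H_1(S) = 0.
\end{equation*}
Since $H_3(M) = 0$ (open $3$-manifold) the left map is injective, and since $\tilde H_1(S) = 0$ the map $H_2(\bar M_0) \oplus H_2(\bar M_1) \to H_2(M)$ is surjective.

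Next I would relate $H_2(\bar M_i)$ to $H_2(M_i^0)$: filling in a ball along the boundary sphere changes nothing in degree $2$ (again because $H_2(S) \to H_2(B^3) = 0$ and $H_3$ terms vanish, a direct Mayer--Vietoris check), so $H_2(\bar M_i) \cong H_2(M_i^0)$. The collapsing map $f_i$ factors the inclusion $\bar M_{i+1} \hookrightarrow M$ up to the identification $M/\bar M_i \cong M_{i+1}^0$: precisely, $f_i$ restricted to $\bar M_{i+1}$ is a homotopy equivalence onto $M_{i+1}^0$ (it collapses only the attached-ball side after the quotient), so on $\pi_2$ the composite $\pi_2(\bar M_{i+1}) \to \pi_2(M) \xrightarrow{(f_i)_*} \pi_2(M_{i+1}^0)$ is an isomorphism. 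Combined with the Mayer--Vietoris surjectivity above — every class in $\pi_2(M)$ is a sum of a class coming from $\bar M_0$ and one coming from $\bar M_1$, and $(f_i)_*$ kills the $\bar M_i$-summand while mapping the $\bar M_{i+1}$-summand isomorphically — this yields surjectivity of $(f_i)_*$. For the nontrivial kernel: the class $[S] \in \pi_2(M)$ is nonzero by the first step, but $S$ bounds $\bar M_i$ on one side, hence $f_i(S)$ bounds a ball in $M_{i+1}^0$ (namely the collapsed image together with the attached $B^3$), so $(f_i)_*[S] = 0$; thus $[S] \in \ker(f_i)_* \setminus \{0\}$.

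The main obstacle I anticipate is making the statement "$f_i|_{\bar M_{i+1}}$ is a homotopy equivalence onto $M_{i+1}^0$, compatibly with the quotient identification $M/\bar M_i \cong M_{i+1}^0$" fully rigorous, since it requires tracking the homeomorphism $M/\bar M_i \cong M_{i+1}^0$ (collapse the $\bar M_i$ side to a point, then that point has a neighborhood which is a ball, giving back the filled-in manifold) and checking that under it the inclusion of $\bar M_{i+1}$ corresponds to the canonical inclusion $\bar M_{i+1} \hookrightarrow M_{i+1}^0$ up to the mild deformation near $S$. One must be slightly careful that $M/\bar M_i$ is genuinely a manifold here — this is exactly where "$\partial M_i$ is a sphere" is invoked — and that the quotient map does not disturb $\pi_2$ of the complement of the collapsed region. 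Once that identification is in hand, the rest is the routine Mayer--Vietoris bookkeeping sketched above, and the argument that $[S] \neq 0$ in $\pi_2(M)$ is precisely the content already extracted in the proof of Lemma \ref{F}.
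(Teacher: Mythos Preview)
Your argument is correct, but it is considerably more elaborate than what the paper does, and in fact your own key step already contains the paper's entire proof. The paper argues surjectivity in one line: any map $g:\mathbb{S}^2\to M^0_{i+1}=M_{i+1}\cup_S B^3$ can be homotoped off the attached $3$-ball into $M_{i+1}$, and then the inclusion $M_{i+1}\hookrightarrow M$ produces a preimage class. The nontrivial-kernel part is exactly your observation that $[S]\neq 0$ in $\pi_2(M)$ while $f_i(S)$ bounds a ball in $M^0_{i+1}$.

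Your route through Hurewicz and Mayer--Vietoris is unnecessary for the bare statement: once you have established that the composite $\pi_2(\bar M_{i+1})\to\pi_2(M)\xrightarrow{(f_i)_*}\pi_2(M^0_{i+1})$ is an isomorphism, surjectivity of $(f_i)_*$ is immediate, and the Mayer--Vietoris decomposition of $H_2(M)$ plays no role. What your approach \emph{does} buy is more structural information---you effectively identify $\pi_2(M)\cong\pi_2(M^0_0)\oplus\pi_2(M^0_1)\oplus\mathbb{Z}[S]$ (the Mayer--Vietoris sequence with $H_3(M)=0$ makes the $[S]$-summand split off), which is stronger than the proposition asks for and could be useful downstream. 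The paper is content with the weaker qualitative statement and gets it by the direct homotopy argument, avoiding any homological bookkeeping and the identification $M/\bar M_i\cong M^0_{i+1}$ that you flagged as the delicate point.
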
 
\begin{proof}In the above statement, $S$ is a non-trivial sphere in $M$. $M^{0}_{i+1}$ is obtained by gluing a $3$-ball $B^{3}$ along $S$ to $M_{i+1}$, i.e. $M_{i+1}^{0}=M_{i+1}\bigcup_{S}B^{3}$, for $i\in I$. \par
The image of $S$ is bounded by a $3$-ball, which implies that $[f_{i}(S)]$ is trivial in $\pi_{2}(M^{0}_{i+1})$. Therefore, the kernel of $(f_{i})_{\ast}$ is non-trivial.\par
  For each $i\in I$, any continuous map $g:\mathbb{S}^{2}\rightarrow M^{0}_{i+1}$ is homotopic to another continuous map $g':\mathbb{S}^{2}\rightarrow M_{i+1}$ in $M^{0}_{i+1}=M_{i+1}\bigcup_{S}B^{3}$. Let $i:M_{i+1}\rightarrow M$ be the inclusion map, $i\circ g'$  is a map from $\mathbb{S}^{2}$ to $M$ and $(f_{i})_{\ast}([g'])=[g]$ in $\pi_{2}(M^{0}_{i+1})$. Therefore, $(f_{i})_{\ast}$ is surjective. 
\end{proof}

In the following, we will use the sphere theorem to analyze the topological structure of a simply-connected open 3-manifold $M$ with $\pi_{2}(M)=\mathbb{Z}$. Together with the surgery as described above, we  prove :
\begin{theorem} \label{G}
Assume that $(M^{3}, g)$ is a simply-connected open 3-manifold with $\pi_{2}(M)=\mathbb{Z}$. Let $0\in M$ be a  point and $r(x)$ a distance function from $x$ to $0$. If there exists a real number $\alpha\in [0,2)$, such that, 
$$\liminf\limits_{r(x) \rightarrow \infty }r^{\alpha}(x)\mathrm{Scal}(x)>0,$$then $M^{3}$ is diffeomorphic to $\mathbb{R}\times \mathbb{S}^{2} $.
\end{theorem}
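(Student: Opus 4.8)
The plan is to push through the surgery decomposition already prepared in this section: split $M$ along a non-trivial sphere, show each capped-off piece is diffeomorphic to $\mathbb{R}^3$ by Theorem \ref{A}, and then reassemble. More precisely, since $M$ is simply connected and $\pi_2(M)=\mathbb{Z}\neq 0$, Theorem \ref{E} and Lemma \ref{F} give a non-trivial embedded sphere $S$ splitting $M$ into two non-relatively-compact pieces $M_0,M_1$, and capping with balls produces open $3$-manifolds $M_0^0=\bar M_0\cup_S B^3$ and $M_1^0=\bar M_1\cup_S B^3$ with $M=\bar M_0\cup_S\bar M_1$ and $M\cong M_0^0\# M_1^0$. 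Van Kampen gives $\pi_1(M_i^0)=\{1\}$. I claim $\pi_2(M_i^0)=0$ for $i=0,1$: by Proposition \ref{J}, $\pi_2(M_i^0)$ is a quotient of $\pi_2(M)=\mathbb{Z}$ by a non-trivial subgroup, hence a finite cyclic group; on the other hand $M_i^0$ is simply connected, so by the Hurewicz theorem $\pi_2(M_i^0)\cong H_2(M_i^0;\mathbb{Z})$, and it is classical that the second integral homology of an open orientable $3$-manifold is free abelian (for instance because such a manifold has the homotopy type of a $2$-complex, and $H_2$ of a $2$-complex is a subgroup of a free abelian group). A free abelian group that is finite is trivial, so $\pi_2(M_i^0)=0$, and Lemma \ref{H} then shows $M_0^0$ and $M_1^0$ are contractible.

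Next I would equip $M_i^0$ with a metric to which Theorem \ref{A} applies. The compact-boundary piece $\bar M_i$ carries the restriction of $g$; extend it smoothly across the capping ball (a partition-of-unity interpolation on a collar of $S$), getting a complete metric $g_i$ on $M_i^0$ that agrees with $g$ outside a compact set. The curvature hypothesis is a condition at infinity, so it is unaffected by the compact cap, and one only has to compare distance functions. Fixing a base point $o_i\in M_i$ and using that $S$ separates $B^3$ from $M_i$ inside $M_i^0$, any minimizing path in $(M_i^0,g_i)$ joining $o_i$ to a point $x\in M_i$ can have its excursions into $B^3$ replaced by paths inside $S$, giving $d_M(o_i,x)\le d_{\bar M_i}(o_i,x)\le d_{g_i}(o_i,x)+\mathrm{diam}(S)$ for $x\in M_i$; since $\alpha\in[0,2)$ this yields $r_{g_i}(x)\to\infty$ as $r(x)\to\infty$ along $M_i$ and $\mathrm{Scal}_{g_i}(x)=\mathrm{Scal}_g(x)\ge C\,r(x)^{-\alpha}\ge C'\,r_{g_i}(x)^{-\alpha}$ for $r_{g_i}(x)$ large. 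Hence $(M_i^0,g_i)$ is a contractible complete $3$-manifold satisfying $\liminf_{r_{g_i}\to\infty} r_{g_i}^{\alpha}\mathrm{Scal}_{g_i}>0$, so by Theorem \ref{A} both $M_0^0$ and $M_1^0$ are diffeomorphic to $\mathbb{R}^3$.

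Finally I would reassemble. Now $\bar M_i=M_i^0\setminus\mathrm{int}(B^3)$ is the complement of a smoothly embedded closed $3$-ball in $\mathbb{R}^3$, which by the smooth Schoenflies theorem in dimension $3$ is diffeomorphic to $\mathbb{S}^2\times[0,\infty)$ with boundary $S\cong\mathbb{S}^2\times\{0\}$. Gluing two copies of $\mathbb{S}^2\times[0,\infty)$ along their boundary spheres gives $\mathbb{S}^2\times\mathbb{R}$, and since smooth and topological categories agree in dimension $3$, $M\cong M_0^0\# M_1^0$ is diffeomorphic to $\mathbb{S}^2\times\mathbb{R}$. I expect the crux to be the first paragraph together with the distance comparison of the second: one must simultaneously guarantee that the surgered pieces are contractible — which requires upgrading "$\pi_2$ is a finite cyclic quotient of $\mathbb{Z}$" to "$\pi_2=0$" via the freeness of $H_2$ — and that the slow-decay lower bound on scalar curvature survives the surgery, so that Theorem \ref{A} is legitimately applicable; the remaining topological inputs and the final gluing are routine.
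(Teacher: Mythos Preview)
Your argument is correct and in fact cleaner than the paper's. The overall architecture is the same---split along a non-trivial sphere, cap off, transplant the metric, apply Theorem~\ref{A}, and reassemble---but you handle the key topological step differently. The paper distinguishes two cases: if $[S]$ generates $\pi_2(M)=\mathbb{Z}$ then $\pi_2(M_i^0)=0$ directly; if not, then $\pi_2(M_i^0)$ is only known to be finite, and the paper invokes a separate Proposition (an induction on $|\pi_2|$, repeatedly splitting along further spheres) to force $M_i^0\cong\mathbb{R}^3$. You bypass this case split and the auxiliary proposition with a single homological observation: since $M_i^0$ is an open $3$-manifold it has the homotopy type of a $2$-complex, so $H_2(M_i^0;\mathbb{Z})$ is free abelian; combined with Hurewicz and Proposition~\ref{J} this forces the finite cyclic group $\pi_2(M_i^0)$ to vanish outright. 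This is a genuine simplification. Your treatment of the metric transplant is also more explicit than the paper's (which asserts the curvature condition survives without discussing distances); the inequality $d_M(o_i,x)\le d_{g_i}(o_i,x)+C$ you sketch is the right statement, though the constant should absorb the diameter of the whole modification region rather than just $\mathrm{diam}(S)$, and one should note that a $g_i$-minimizing path cannot make unboundedly many excursions into the cap---but this is a routine refinement, and the $\liminf$ condition only needs the ratio $r(x)/r_{g_i}(x)$ bounded, which follows.
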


\begin{proof}
By the Sphere theorem , there exists a non-trivial embedded sphere $S$ cutting $M$ into two non relatively compact components $M_{0}$ and $M_{1}$. $M$ can be viewed as the connected sum of two simply-connected 3-manifolds $M^{0}_{0}$ and $M^{0}_{1}$, where $M^{0}_{0}=M_{0}\bigcup_{S}B^{3}$ and $M^{0}_{1}=M_{1}\bigcup_{S} B^{3}$.\par 
Furthermore, we will construct a metric over $M^{0}_{i}$ satisfying the curvature condition as assumed in Theorem \ref{A}. \par
First, let $N_{r}(S)$ be the tubular neighborhood of $S$ in $\overline{M_{i}}$, with radius $r$.  For any constant $\epsilon>0$, there exists a smooth function $\phi(x)$ supported in $N_{2\epsilon}(S)$, such that, $\phi(x)=1$ on $N_{\epsilon}(S)$. Meanwhile, we may find a smooth function $\tau (x)$ on $M^{0}_{i}$, satisfying the following:
\begin{enumerate}
\item $\tau (x)$ has support in the compact set $M^{0}_{i}\setminus (\overline{M_{i}}\setminus N_{4\epsilon}(S))$,
\item $\tau (x)=1$ on $M^{0}_{i}\setminus (\overline{M_{i}}\setminus N_{2\epsilon}(S))$
 \end{enumerate}\par
 Second, choose any smooth metric $g'$ over $M^{0}_{i}\setminus (\overline{M_{i}}\setminus N_{4\epsilon}(S))$. Define a new metric $g_{i}$ over $M^{0}_{i}$:
 
\begin{equation}
 g_{i}=\left\{  
\begin{array}{cc}
g,&\quad \overline{M_{i}}\setminus N_{4\epsilon}(S);\\
g',&\quad M^{0}_{i}\setminus M_{i};\\
(1-\phi)g+\tau g' &\quad \text{otherwise}.
\end{array}
\right.
\end{equation}
 where $g$ is a metric in the assumption of Theorem \ref{G}.\par
 
 If $[S]$ is a generator of $\pi_{2}(M)$, from the proof of Proposition \ref{J}, $[S]$ is contained in the kernel of the induced map $(f_{i})_{\ast}$. Then $\pi_{2}(M^{0}_{i})=\{0\}$ for each $i\in I$. Thanks to Lemma \ref{H}, $M^{0}_{i}$ is contractible. 
 Therefore, $(M^{0}_{i}, g_{i})$ satisfies the curvature condition as assumed  in Theorem \ref{G}. Due to Theorem \ref{A}, each $M^{0}_{i}$ is homeomorphic to $\mathbb{R}^{3}$. $M$ is the connected sum of two $\mathbb{R}^{3}$s. Hence, $M$ is diffeomorphic to $\mathbb{S}^{2}\times \mathbb{R}$.

If $[S]$ is not a generator, Proposition \ref{J} shows that the induced map  maps from $\pi_{2}(M)\rightarrow \pi_{2}(M^{0}_{i})$ is non-trivial. We see that $\pi_{2}(M^{0}_{i})$ is finite. In this case, there is a unique topological structure of $M^{0}_{i}$ as follows. By Proposition \ref{K} (in the following), each $M^{0}_{i}$ is homeomorphic to $\mathbb{R}^{3}$. That is to say, $M$ is homeomorphic to $\mathbb{S}^{2}\times\mathbb{R}$.
\end{proof}

 \begin{proposition}\label{K}Let $(M^{3}, g)$ be a simply-connected open 3-manifold satisfying that $\pi_{2}(M)$ is a finite group. For $0\in M$, assume that $r$ is the distance function from $0$. If there exists a number $\alpha\in [0,2)$, such that, 
$$\liminf\limits_{r(x) \rightarrow \infty }r^{\alpha}(x)\mathrm{Scal}(x)>0,$$then $M^{3}$ is homeomorphic to $\mathbb{R}^{3} $. 
\end{proposition}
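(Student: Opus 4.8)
The plan is to reduce everything to the single statement that $\pi_2(M)=0$, after which the proof is just an assembly of results already in the paper: Lemma \ref{H} turns $\pi_1(M)=\pi_2(M)=0$ into the statement that $M$ is contractible, and Theorem \ref{A} (whose scalar-curvature hypothesis is word-for-word the one assumed here) then gives that $M$ is diffeomorphic, hence homeomorphic, to $\mathbb{R}^3$. So the only real content of the proposition is the implication: for a simply-connected open $3$-manifold, $\pi_2(M)$ finite $\Rightarrow \pi_2(M)=0$.

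To prove that implication, first note that $M$ is orientable (it is simply-connected) and, being open, it is connected, noncompact and without boundary. Since $\pi_1(M)=0$, the Hurewicz theorem gives an isomorphism $\pi_2(M)\cong H_2(M;\mathbb{Z})$, so it suffices to show $H_2(M;\mathbb{Z})$ is torsion-free: a finite torsion-free abelian group is trivial. For this I would use Poincar\'e--Lefschetz duality, which (exactly as in the proof of Lemma \ref{H}, where the paper already uses $H_3(M;\mathbb{Z})\cong H^0_c(M;\mathbb{Z})$) gives $H_2(M;\mathbb{Z})\cong H^1_c(M;\mathbb{Z})$. Writing $H^1_c(M;\mathbb{Z})=\varinjlim_K H^1(M,M\setminus K;\mathbb{Z})$ over compact $K$, the universal coefficient theorem shows each $H^1(M,M\setminus K;\mathbb{Z})$ is torsion-free (its $\mathrm{Ext}$-summand involves $H_0(M,M\setminus K)=0$), and a direct limit of torsion-free groups is torsion-free. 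Alternatively one may invoke the classical fact that an open $3$-manifold has the homotopy type of a $2$-dimensional CW complex, whose top homology is automatically free abelian. Either route yields $\pi_2(M)$ torsion-free, hence trivial.

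With $\pi_1(M)=\pi_2(M)=0$ established, Lemma \ref{H} gives that $M$ is contractible, and Theorem \ref{A} then finishes the proof. The only point requiring care is the torsion-freeness of $H_2(M;\mathbb{Z})$; it is entirely classical, but I would state it with a precise justification (the duality argument above, or a citation for the existence of a $2$-dimensional spine of an open $3$-manifold) rather than leave it implicit. No new geometric input is needed beyond Theorem \ref{A}: the slow-decay scalar-curvature hypothesis enters only through that theorem, and the rest is topology. It is also worth remarking that the same argument shows $0$ is the \emph{only} finite value $\pi_2(M)$ can take, which is precisely why the non-generator case in the proof of Theorem \ref{G} collapses to the contractible situation.
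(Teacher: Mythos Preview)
Your proof is correct and takes a genuinely different, more direct route than the paper's. The paper argues by contradiction: assuming $\pi_2(M)$ is finite and nontrivial, it invokes the Sphere Theorem to find an essential embedded sphere, splits $M$ as a connected sum $M^{0}_0\# M^{0}_1$, equips each summand with a metric still satisfying the scalar-curvature hypothesis, and observes (via Proposition~\ref{J}) that each summand has strictly smaller $|\pi_2|$; iterating, the process terminates with both summands contractible, hence (by Theorem~\ref{A}) homeomorphic to $\mathbb{R}^3$, so the last stage is $\mathbb{S}^2\times\mathbb{R}$ with $\pi_2=\mathbb{Z}$, contradicting finiteness. You, by contrast, bypass the Sphere Theorem, the iterated surgery, and the metric constructions entirely: Hurewicz plus Poincar\'e duality give $\pi_2(M)\cong H_2(M;\mathbb{Z})\cong H^1_c(M;\mathbb{Z})$, and the latter is torsion-free, so a finite $\pi_2$ must already vanish. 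Your argument is strictly more elementary and in fact proves the sharper, purely topological statement that \emph{no} simply-connected open $3$-manifold has finite nonzero $\pi_2$, independent of any curvature assumption; in particular it shows (as you note) that the ``non-generator'' branch in the proof of Theorem~\ref{G} is vacuous. The paper's inductive-surgery approach, while heavier here, does illustrate a decomposition technique that could be useful in situations where such a homological shortcut is unavailable.
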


\begin{proof}Suppose that $M$ is not homeomorphic to $\mathbb{R}^{3}$. In addition, we observe that $\pi_{2}(M)$ is non-trivial. (Otherwise, by Lemma \ref{H}, $M$ is contractible. Combined with Theorem \ref{A}, $M$ is $\mathbb{R}^{3}$, which contradicts our above assumption. )\par

By the sphere theorem, there exists a non-trivial embedded sphere $S$ cutting $M$ into two non relatively compact components. Two simply-connected non-compact manifolds $M^{0}_{0}$ and $M^{0}_{1}$ are obtained by the process described in the proof of Theorem \ref{G}. Therefore, $M^{0}_{0}$ and $M^{0}_{1}$ satisfy the following: 
\begin{itemize}
\item $M$ is the connected sum of $M^{0}_{0}$ and $M^{0}_{1}$
\item $|\pi_{2}(M^{0}_{i})|<|\pi_{2}(M)|$, since the map $\pi_{2}(M)\rightarrow \pi_{2}(M^{0}_{i})$ is surjective with non-trivial kernel (Proposition \ref{J}).
\end{itemize}
\par
We observe that one of $\{\pi_{2}(M^{0}_{i})\}_{i\in I}$ is nontrivial. (Otherwise, from Lemma \ref{H}, $M^{0}_{i}$ is contractible. From Theorem \ref{A}, $M^{0}_{i}$ is $\mathbb{R}^{3}$, which implies that $M$ is $\mathbb{S}^{2}\times \mathbb{R}$. This contradicts the assumption that $\pi_{2}(M)$ is finite. )\par 
Without loss of generality, we may assume that $\pi_{2}(M^{0}_{0})$ is nontrivial. Set $M_{1}=M^{0}_{0}$. $M^{1}_{0}$ and $M^{1}_{1}$ can be  constructed as the above process in the proof of Theorem \ref{G}. We may suppose that one of $\pi_{2}(M^{1}_{i})$ is non-trivial. (Otherwise, a similar argument for $\pi_{2}(M^{0}_{i})$ as above will work.) Then we can repeat this process, until the second homotopy groups of two new manifolds are both trivial. \par
After repeating the above process several times, three families of manifolds $M_{k}$, $M^{k}_{0}$ and $M^{k}_{1}$ are constructed as in the above process. These manifolds satisfy the following: 
\begin{itemize}
\item $M_{k}$ is the connected sum of $M^{k}_{0}$ and $M^{k}_{1}$,
\item $|\pi_{2}(M_{k})|<|\pi_{2}(M_{k-1})|$ .
\end{itemize}  
Because $\pi_{2}(M)$ is finite and $|\pi_{2}(M_{k})|<|\pi_{2}(M_{k-1})|$, this process will stop after finite steps. There exists an integer $k_{0}$ such that $\pi_{2}(M_{k_{0}})$ is nontrivial and $\pi_{2}(M^{k_{0}}_{i})$ is trivial for each $i$. From Lemma \ref{H}, this implies that $M^{k_{0}}_{i}$ is contractible. Due to Theorem \ref{A}, $M^{k_{0}}_{i}$ is homeomorphic to $\mathbb{R}^{3}$. However, $M_{k_{0}}$ is the connected sum of $M^{k_{0}}_{0}$ and $M^{k_{0}}_{1}$. Hence, $M_{k_{0}}$ is $\mathbb{S}^{2}\times \mathbb{R}$, which contradicts the fact that $\pi_{2}(M_{k_{0}})$ is finite.\par
\end{proof}

\section*{Acknowledgement}
The author would like to express his deep gratitude to his advisor, Professor G\'erard Besson for his useful suggestions, also for his constant encouragement and support. The work is supported by ERC GETOM project and ANR-CCEM project.

\end{document}